\newtheorem{theorem}{Theorem}
\theoremstyle{plain}
\newtheorem{definition}{Definition}
\newtheorem{notation}{Notation}
\newtheorem{proposition}{Proposition}
\newtheorem{remark}{Remark}
\numberwithin{equation}{section}
\begin{document}

\begin{center}
\textbf{CURVES OF GENERALIZED }$\mathit{AW(}k\mathit{)}$\textbf{-TYPE \\[0pt]
IN EUCLIDEAN SPACES}

\bigskip

\textbf{Kadri ARSLAN, \c{S}aban G\"{U}VEN\c{C}}

\textbf{\ }
\end{center}

\medskip

\textbf{Abstract. }In this study, we consider curves of generalized $AW(k)$%
-type of Euclidean $n$-space. We give curvature conditions of these kind of
curves.

\medskip

\textbf{Mathematics Subject Classification.} 53C40, 53C42. \medskip

\textbf{Key words: }Curves of $AW(k)$-type, curves of osculating order $d$.

\section{\textbf{Introduction}}

In \cite{AW1}, the first author and A. West defined the notion of
submanifolds of $AW(k)$-type. Since then, many works have been done related
to these type of manifolds (for example, see \cite{AO1}, \cite{AO2}, \cite%
{ACDO} and \cite{KA}). In \cite{KA}, the first author and B. K\i l\i \c{c}
studied curves and surfaces of $AW(k)$-type. Further, in \cite{OG1}, C. \"{O}%
zg\"{u}r and F. Gezgin carried out the results for where given in \cite{AO1}
to Bertrand curves and new special curves defined in \cite{IT} by S. Izumiya
and N. Takeuchi. For example, in \cite{AO1} and \cite{KA}, the authors gave
curvature conditions and characterizations related to these curves in $%
%TCIMACRO{\U{211d} }%
%BeginExpansion
\mathbb{R}
%EndExpansion
^{n}$. Also many results are obtained in Lorentzian spaces in \cite{KT}, 
\cite{KE}, \cite{EMT}, \cite{MBE} and \cite{ME}. In \cite{Yo}, D. Yoon
investigate curvature conditions of curves of $AW(k)$-type in Lie group $G$.
Recently, C. \"{O}zg\"{u}r and the second author studied some types of slant
curves of pseudo-Hermitian $AW(k)$-type in \cite{OG}.

In the present study, we give a generalization of AW(k)-type curves in
Euclidean $n$-space $\mathbb{E}^{n}$. We also give curvature conditions of
these type of curves.

\section{\textbf{Basic Notation}}

Let $\gamma :I\subseteq 
%TCIMACRO{\U{211d} }%
%BeginExpansion
\mathbb{R}
%EndExpansion
\rightarrow \mathbb{E}^{n}$ be a unit speed curve in $\mathbb{E}^{n}$. The
curve $\gamma $ is called a Frenet curve of osculating order $d$ if its
higher order derivatives $\gamma ^{\prime }(s),\gamma ^{\prime \prime
}(s),...,\gamma ^{\left( d\right) }(s)$ ($d\leq n$) are linearly independent
and $\gamma ^{\prime }(s),\gamma ^{\prime \prime }(s),...,\gamma ^{\left(
d+1\right) }(s)$ are no longer linearly independent for all $s\in I$. To
each Frenet curve of order $d,$ one can associate an orthonormal $d$-frame $%
v_{1},v_{2},...,v_{d}$ along $\gamma $ (such that $\gamma ^{\prime
}(s)=v_{1} $ ) called the Frenet $d$-frame and $(d-1)$ functions $\kappa
_{1},\kappa _{2},...,\kappa _{d-1}:I\rightarrow \mathbb{%
%TCIMACRO{\U{211d} }%
%BeginExpansion
\mathbb{R}
%EndExpansion
}$ called the Frenet curvatures such that the Frenet formulas are defined in
the usual way:%
\begin{equation}
\left. 
\begin{array}{l}
D_{v_{1}}v_{1}=\kappa _{1}v_{2}, \\ 
D_{v_{1}}v_{2}=-\kappa _{1}v_{1}+\kappa _{2}v_{3}, \\ 
\text{ \  \  \  \  \  \  \  \  \  \  \ }... \\ 
D_{v_{1}}v_{i}=-\kappa _{i-1}v_{i-1}+\kappa _{i}v_{i+1}, \\ 
D_{v_{1}}v_{d}=-\kappa _{d-1}v_{d-1},%
\end{array}%
\right \}  \label{Frenetformulas}
\end{equation}%
where $3\leq i\leq d-1$.

\section{Curves of Generalized $AW(k)$-type}

Let $\gamma $ be a unit speed curve in $n$-dimensional Euclidean space $%
\mathbb{E}^{n}$. By the use of Frenet formulas (\ref{Frenetformulas}), we
obtain the higher order derivatives of $\gamma $ as follows:%
\begin{equation}
\left. 
\begin{array}{l}
\gamma ^{\prime \prime }(s)=\kappa _{1}v_{2}, \\ 
\gamma ^{\prime \prime \prime }(s)=-\kappa _{1}^{2}v_{1}+\kappa _{1}^{\prime
}v_{2}+\kappa _{1}\kappa _{2}v_{3}, \\ 
\gamma ^{(iv)}(s)=-3\kappa _{1}\kappa _{1}^{\prime }v_{1}+\left( \kappa
_{1}^{\prime \prime }-\kappa _{1}^{3}-\kappa _{1}\kappa _{2}^{2}\right) v_{2}
\\ 
\text{ \  \  \  \  \  \  \  \  \  \ }+\left( 2\kappa _{1}^{\prime }\kappa _{2}+\kappa
_{1}\kappa _{2}^{\prime }\right) v_{3}+\kappa _{1}\kappa _{2}\kappa
_{3}v_{4}, \\ 
\gamma ^{(v)}(s)=\left[ -3(\kappa _{1}^{\prime })^{2}-4\kappa _{1}\kappa
_{1}^{\prime \prime }+\kappa _{1}^{4}+\kappa _{1}^{2}\kappa _{2}^{2}\right]
v_{1} \\ 
\text{ \  \  \  \  \  \  \  \  \ }+\left( \kappa _{1}^{\prime \prime \prime
}-6\kappa _{1}^{2}\kappa _{1}^{\prime }-3\kappa _{1}^{\prime }\kappa
_{2}^{2}-3\kappa _{1}\kappa _{2}\kappa _{2}^{\prime }\right) v_{2} \\ 
\text{ \  \  \  \  \  \  \  \  \ }+\left( 3\kappa _{1}^{\prime \prime }\kappa
_{2}+3\kappa _{1}^{\prime }\kappa _{2}^{\prime }-\kappa _{1}^{3}\kappa
_{2}-\kappa _{1}\kappa _{2}^{3}+\kappa _{1}\kappa _{2}^{\prime \prime
}-\kappa _{1}\kappa _{2}\kappa _{3}^{2}\right) v_{3} \\ 
\text{ \  \  \  \  \  \  \  \  \ }+(3\kappa _{1}^{\prime }\kappa _{2}\kappa
_{3}+2\kappa _{1}\kappa _{2}^{\prime }\kappa _{3}+\kappa _{1}\kappa
_{2}\kappa _{3}^{\prime })v_{4}+\kappa _{1}\kappa _{2}\kappa _{3}\kappa
_{4}v_{5}.%
\end{array}%
\right \}  \label{higherderivativesofgamma}
\end{equation}

\begin{notation}
Let us write%
\begin{equation}
\left. 
\begin{array}{l}
N_{1}=\kappa _{1}v_{2}, \\ 
N_{2}=\kappa _{1}^{\prime }v_{2}+\kappa _{1}\kappa _{2}v_{3}, \\ 
N_{3}=\lambda _{2}v_{2}+\lambda _{3}v_{3}+\lambda _{4}v_{4}, \\ 
N_{4}=\mu _{2}v_{2}+\mu _{3}v_{3}+\mu _{4}v_{4}+\mu _{5}v_{5},%
\end{array}%
\right \}  \label{Normalparts}
\end{equation}%
where%
\begin{equation}
\begin{array}{l}
\lambda _{2}=\kappa _{1}^{\prime \prime }-\kappa _{1}^{3}-\kappa _{1}\kappa
_{2}^{2}, \\ 
\lambda _{3}=2\kappa _{1}^{\prime }\kappa _{2}+\kappa _{1}\kappa
_{2}^{\prime }, \\ 
\lambda _{4}=\kappa _{1}\kappa _{2}\kappa _{3}%
\end{array}
\label{lamdas}
\end{equation}%
and%
\begin{equation}
\begin{array}{l}
\mu _{2}=\kappa _{1}^{\prime \prime \prime }-6\kappa _{1}^{2}\kappa
_{1}^{\prime }-3\kappa _{1}^{\prime }\kappa _{2}^{2}-3\kappa _{1}\kappa
_{2}\kappa _{2}^{\prime }, \\ 
\mu _{3}=3\kappa _{1}^{\prime \prime }\kappa _{2}+3\kappa _{1}^{\prime
}\kappa _{2}^{\prime }-\kappa _{1}^{3}\kappa _{2}-\kappa _{1}\kappa
_{2}^{3}+\kappa _{1}\kappa _{2}^{\prime \prime }-\kappa _{1}\kappa
_{2}\kappa _{3}^{2}, \\ 
\mu _{4}=3\kappa _{1}^{\prime }\kappa _{2}\kappa _{3}+2\kappa _{1}\kappa
_{2}^{\prime }\kappa _{3}+\kappa _{1}\kappa _{2}\kappa _{3}^{\prime }, \\ 
\mu _{5}=\kappa _{1}\kappa _{2}\kappa _{3}\kappa _{4}%
\end{array}
\label{nus}
\end{equation}%
are differentiable functions.
\end{notation}

We give the following definition:

\begin{definition}
\label{definitionAW(k)}Frenet curves are

$i)$ \ of generalized $AW(1)$-type if they satisfy $N_{4}=0$,

$ii)$ of generalized $AW(2)$-type if they satisfy%
\begin{equation}
\left \Vert N_{2}\right \Vert ^{2}N_{4}=\left \langle N_{2},N_{4}\right \rangle
N_{2},  \label{eqGAW(2)}
\end{equation}

$iii)$ of generalized $AW(3)$-type if they satify%
\begin{equation}
\left \Vert N_{1}\right \Vert ^{2}N_{4}=\left \langle N_{1},N_{4}\right \rangle
N_{1},  \label{eqGAW3}
\end{equation}

$iv)$ of generalized $AW(4)$-type if they satify%
\begin{equation}
\left \Vert N_{3}\right \Vert ^{2}N_{4}=\left \langle N_{3},N_{4}\right \rangle
N_{3},  \label{eqGAW4}
\end{equation}

$v)$ of generalized $AW(5)$-type if they satify%
\begin{equation}
N_{4}=a_{1}N_{1}+b_{1}N_{2},  \label{eqGAW5}
\end{equation}

$vi)$ of generalized $AW(6)$-type if they satify%
\begin{equation}
N_{4}=a_{2}N_{1}+b_{2}N_{3},  \label{eqGAW6}
\end{equation}

$vii)$ of generalized $AW(7)$-type if they satisfy%
\begin{equation}
N_{4}=a_{3}N_{2}+b_{3}N_{3},  \label{eqGAW7}
\end{equation}%
where $a_{i},b_{i}$ $(1\leq i\leq 3)$ are non-zero real valued
differentiable functions.
\end{definition}

\begin{remark}
\bigskip We use notation $GAW(k)$-type for curves of generalized $AW(k)$%
-type.
\end{remark}

Geometrically, a curve of $GAW(k)$-type is a curve whose fifth derivative's
normal part is either zero or linearly dependent with one or two of its
previous derivatives' normal parts.

Firstly, we give the following proposition:

\begin{proposition}
The osculating order of a Frenet curve of any $GAW(k)$-type can not be
bigger than or equal to $5$.
\end{proposition}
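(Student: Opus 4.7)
The plan is to argue by contradiction: suppose $\gamma$ has osculating order $d\geq 5$, derive that $N_4$ must contain a nonzero $v_5$-component, and then observe that none of the seven defining conditions of Definition \ref{definitionAW(k)} permits this.

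First, I would unpack what osculating order $d\geq 5$ forces on the curvatures. By the definition of osculating order, $\gamma'(s),\ldots,\gamma^{(5)}(s)$ are linearly independent. Inspecting the expression for $\gamma^{(v)}$ in \eqref{higherderivativesofgamma}, the only term outside $\mathrm{span}\{v_1,v_2,v_3,v_4\}$ is $\kappa_1\kappa_2\kappa_3\kappa_4\,v_5$. Since $\gamma',\gamma'',\gamma''',\gamma^{(iv)}$ already lie in $\mathrm{span}\{v_1,v_2,v_3,v_4\}$, the linear independence forces $\mu_5=\kappa_1\kappa_2\kappa_3\kappa_4\neq 0$. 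In particular, $\kappa_1,\kappa_2,\kappa_3,\kappa_4$ are all nowhere-vanishing, and $N_4$ has a nonzero component along $v_5$.

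Next, I would go through the seven cases of Definition \ref{definitionAW(k)} and show each is incompatible with this $v_5$-component. In cases (i), (v), (vi), (vii), the relation expresses $N_4$ directly as a linear combination drawn from $\{N_1,N_2,N_3\}$ (including the trivial combination $0$), and inspection of \eqref{Normalparts} shows that $N_1,N_2,N_3\in\mathrm{span}\{v_2,v_3,v_4\}$, which contains no $v_5$-component. In cases (ii), (iii), (iv), the relation has the form $\|N_j\|^2 N_4=\langle N_j,N_4\rangle N_j$ for $j\in\{2,1,3\}$ respectively; here I would first note that $\kappa_1,\kappa_2,\kappa_3\neq 0$ implies $N_1,N_2,N_3$ are all nonzero (using $\lambda_4=\kappa_1\kappa_2\kappa_3\neq 0$ for $N_3$ and the $v_3$-coefficient $\kappa_1\kappa_2\neq 0$ for $N_2$), so $\|N_j\|^2\neq 0$, and the equation reduces to $N_4$ being a scalar multiple of $N_j\in\mathrm{span}\{v_2,v_3,v_4\}$. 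Again the $v_5$-component is forbidden.

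In every case we obtain a contradiction with $\mu_5\neq 0$, so $d\leq 4$. There is no real obstacle here: the argument is a direct coefficient comparison in the Frenet frame, and the only subtlety worth flagging is making sure $N_1,N_2,N_3$ are nonzero in cases (ii)--(iv) so that the given tensorial equations genuinely force proportionality rather than being vacuously satisfied.
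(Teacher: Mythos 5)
Your proof is correct and follows essentially the same route as the paper: both rest on the observation that $N_1$, $N_2$, $N_3$ have no $v_5$-component, so every one of the seven defining relations forces $\mu_5=\kappa_1\kappa_2\kappa_3\kappa_4=0$, contradicting the nonvanishing of $\kappa_1,\dots,\kappa_4$ required for osculating order $d\geq 5$. Your additional check that $N_1$, $N_2$, $N_3$ are nonzero in cases (ii)--(iv), so that those relations are not vacuously satisfied, is a detail the paper's one-line proof glosses over, but it does not change the substance of the argument.
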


\begin{proof}
Let $\gamma :I\subseteq 
%TCIMACRO{\U{211d} }%
%BeginExpansion
\mathbb{R}
%EndExpansion
\rightarrow \mathbb{E}^{n}$ be a Frenet curve of osculating order $d$. If $%
\gamma $ is of any $GAW(k)$-type, since none of $N_{i}$ $(1\leq i\leq 3)$
contains a component in the direction of $v_{5}$, we find $\mu _{5}=\kappa
_{1}\kappa _{2}\kappa _{3}\kappa _{4}=0$. This concludes $d\leq 4$, which
completes the proof.
\end{proof}

Using equations \ref{Normalparts} and Definition \ref{definitionAW(k)}, we
obtain the following main theorem:

\begin{theorem}
\label{maintheorem}Let $\gamma $ be a unit speed Frenet curve of osculating
order $d\leq 4$ in $n$-dimensional Euclidean space $\mathbb{E}^{n}$. Then $%
\gamma $ is

$i)$ \ of \ $GAW(1)$-type if and only if%
\begin{equation*}
\mu _{2}=\mu _{3}=\mu _{4}=0,
\end{equation*}

$ii)$ \ of \ $GAW(2)$-type if and only if%
\begin{equation*}
\mu _{4}=0,
\end{equation*}%
\begin{equation*}
\kappa _{1}\kappa _{2}\mu _{2}-\kappa _{1}^{\prime }\mu _{3}=0,
\end{equation*}

$iii)$ \ of $GAW(3)$-type if and only if%
\begin{equation*}
\mu _{3}=\mu _{4}=0,
\end{equation*}

$iv)$ of $GAW(4)$-type if and only if%
\begin{equation*}
\lambda _{2}\mu _{3}-\lambda _{3}\mu _{2}=0,
\end{equation*}%
\begin{equation*}
\lambda _{2}\mu _{4}-\lambda _{4}\mu _{2}=0,
\end{equation*}

$v)$ \ of $GAW(5)$-type if and only if%
\begin{equation*}
\mu _{2}=a_{1}\kappa _{1}+b_{1}\kappa _{1}^{\prime },
\end{equation*}%
\begin{equation*}
\mu _{3}=b_{1}\kappa _{1}\kappa _{2},
\end{equation*}%
\begin{equation*}
\mu _{4}=0,
\end{equation*}

$vi)$ of $GAW(6)$-type if and only if%
\begin{equation*}
\mu _{2}=a_{2}\kappa _{1}+b_{2}\lambda _{2},
\end{equation*}%
\begin{equation*}
\mu _{3}=b_{2}\lambda _{3},
\end{equation*}%
\begin{equation*}
\mu _{4}=b_{2}\lambda _{4},
\end{equation*}

$vii)$ of $GAW(7)$-type if and only if%
\begin{equation*}
\mu _{2}=a_{3}\kappa _{1}^{\prime }+b_{3}\lambda _{2},
\end{equation*}%
\begin{equation*}
\mu _{3}=a_{3}\kappa _{1}\kappa _{2}+b_{3}\lambda _{3},
\end{equation*}%
\begin{equation*}
\mu _{4}=b_{3}\lambda _{4}.
\end{equation*}
\end{theorem}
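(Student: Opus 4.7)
The plan is to verify each of the seven equivalences by expanding the vector equations of Definition~\ref{definitionAW(k)} in the orthonormal basis $\{v_2,v_3,v_4,v_5\}$ provided by the Frenet frame and comparing coefficients. The key structural input is already encoded in (\ref{Normalparts}): $N_1$, $N_2$, and $N_3$ have no $v_5$-component, the only $v_5$-component of $N_4$ is $\mu_5 v_5$, and the preceding proposition yields $\mu_5 = \kappa_1\kappa_2\kappa_3\kappa_4 = 0$. Consequently every coefficient equation is automatically satisfied in the $v_5$-direction, and the genuine content lives in the three-dimensional subspace $\mathrm{span}\{v_2,v_3,v_4\}$.

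Parts (i), (iii), (v), (vi) and (vii) are then essentially direct substitution. For (i), $N_4 = 0$ together with orthonormality forces $\mu_2 = \mu_3 = \mu_4 = 0$. For (iii), since $N_1 = \kappa_1 v_2$, equation (\ref{eqGAW3}) becomes $\kappa_1^2 N_4 = \kappa_1^2 \mu_2 v_2$, which (using $\kappa_1\neq 0$ for a curve of osculating order $d\geq 2$) yields $\mu_3 = \mu_4 = 0$. For (v), (vi), (vii) one substitutes the expansions of $N_1,N_2,N_3$ into (\ref{eqGAW5})--(\ref{eqGAW7}), reads off the $v_2,v_3,v_4$ coefficients against those of $N_4$, and the stated $\mu_j$-formulas drop out term by term; the extra condition $\mu_4 = 0$ in (v) is forced because neither $N_1$ nor $N_2$ has a $v_4$-component.

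The slightly more substantive cases are (ii) and (iv), both of the form $\|X\|^2 N_4 = \langle X,N_4\rangle X$, which geometrically expresses the proportionality of $N_4$ and $X$. For (ii), with $X = N_2 = \kappa_1' v_2 + \kappa_1\kappa_2 v_3$, the $v_4$-coefficient equation immediately gives $\|N_2\|^2 \mu_4 = 0$, and assuming $N_2\neq 0$ we conclude $\mu_4 = 0$. The $v_2$- and $v_3$-coefficient equations then reduce, after cancellation of the common factor $\|N_2\|^2$, to the single scalar relation $\kappa_1\kappa_2\mu_2 - \kappa_1'\mu_3 = 0$. For (iv), $X = N_3$ has a nontrivial $v_4$-component $\lambda_4$, so the three coefficient equations give exactly the two independent conditions $\lambda_2\mu_3 - \lambda_3\mu_2 = 0$ and $\lambda_2\mu_4 - \lambda_4\mu_2 = 0$ stated in the theorem (the third, $\lambda_3\mu_4 - \lambda_4\mu_3 = 0$, is redundant, since it follows from the first two whenever $\lambda_2\neq 0$).

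The main obstacle is bookkeeping rather than computational depth: one must keep track of the degenerate subcases in which some $\kappa_i$, some $\lambda_j$, or the reference vector $X$ itself vanishes, and check that in each such instance the listed scalar identities are genuinely equivalent to the full vector equation rather than merely implied by it. The uniform handling of $\mu_5 = 0$ via the preceding proposition is what makes this reduction clean across all seven cases.
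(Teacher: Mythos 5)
Your proposal is correct and follows essentially the same route as the paper: expand each defining vector equation in the Frenet frame and compare coefficients on $v_2,v_3,v_4$ (with the $v_5$-component killed by $\mu_5=0$), which yields exactly the listed scalar conditions in all seven cases. Your added remarks on the degenerate subcases (e.g.\ $N_2=0$ or $\lambda_2=0$) and on the redundancy of the third relation in case (iv) are points the paper passes over silently, but they do not change the argument.
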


\begin{proof}
i) Let $\gamma $ be of $GAW(1)$-type. Then, from equations (\ref{Normalparts}%
) and Definition \ref{definitionAW(k)} , we have $N_{4}=\mu _{2}v_{2}+\mu
_{3}v_{3}+\mu _{4}v_{4}=0$. Since $v_{2},$ $v_{3}$ and $v_{4}$ are linearly
independent, we get $\mu _{2}=\mu _{3}=\mu _{4}=0$. The sufficiency is
trivial.

ii) Let $\gamma $ be of $GAW(2)$-type. If we calculate $\left \Vert
N_{2}\right \Vert ^{2}$ and $\left \langle N_{2},N_{4}\right \rangle $, by the
use of equations (\ref{Normalparts}) and (\ref{eqGAW(2)}), we obtain%
\begin{equation*}
\lbrack (\kappa _{1}^{\prime })^{2}+\kappa _{1}^{2}\kappa _{2}^{2}](\mu
_{2}v_{2}+\mu _{3}v_{3}+\mu _{4}v_{4})=(\kappa _{1}^{\prime }\mu _{2}+\kappa
_{1}\kappa _{2}\mu _{3})(\kappa _{1}^{\prime }v_{2}+\kappa _{1}\kappa
_{2}v_{3}).
\end{equation*}%
Since $v_{2},$ $v_{3}$ and $v_{4}$ are linearly independent, we find $\mu
_{4}=0$ and $\kappa _{1}\kappa _{2}\mu _{2}-\kappa _{1}^{\prime }\mu _{3}=0$%
. Conversely, if $\mu _{4}=0$ and $\kappa _{1}\kappa _{2}\mu _{2}-\kappa
_{1}^{\prime }\mu _{3}=0$, one can easily show that equation (\ref{eqGAW(2)}%
) is satisfied.

iii) Let $\gamma $ be of $GAW(3)$-type. We get $\left \Vert N_{1}\right \Vert
^{2}=\kappa _{1}^{2}$ and $\left \langle N_{1},N_{4}\right \rangle =\kappa
_{1}\mu _{2}$. So, if we write these equations in (\ref{eqGAW3}), we have%
\begin{equation*}
\kappa _{1}^{2}(\mu _{2}v_{2}+\mu _{3}v_{3}+\mu _{4}v_{4})=\kappa _{1}\mu
_{2}(\kappa _{1}v_{2}).
\end{equation*}

Thus, $\mu _{3}=\mu _{4}=0$. Converse theorem is clear.

iv) Let $\gamma $ be of $GAW(4)$-type. We can easily calculate $\left \Vert
N_{3}\right \Vert ^{2}=\lambda _{2}^{2}+\lambda _{3}^{2}+\lambda _{4}^{2}$
and $\left \langle N_{3},N_{4}\right \rangle =\lambda _{2}\mu _{2}+\lambda
_{3}\mu _{3}+\lambda _{4}\mu _{4}$. So equation (\ref{eqGAW4}) gives us%
\begin{equation*}
(\lambda _{2}^{2}+\lambda _{3}^{2}+\lambda _{4}^{2})(\mu _{2}v_{2}+\mu
_{3}v_{3}+\mu _{4}v_{4})=(\lambda _{2}\mu _{2}+\lambda _{3}\mu _{3}+\lambda
_{4}\mu _{4})(\lambda _{2}v_{2}+\lambda _{3}v_{3}+\lambda _{4}v_{4}).
\end{equation*}%
Hence, we can write%
\begin{equation}
(\lambda _{2}^{2}+\lambda _{3}^{2}+\lambda _{4}^{2})\mu _{2}=(\lambda
_{2}\mu _{2}+\lambda _{3}\mu _{3}+\lambda _{4}\mu _{4})\lambda _{2},
\label{eq1}
\end{equation}%
\begin{equation}
(\lambda _{2}^{2}+\lambda _{3}^{2}+\lambda _{4}^{2})\mu _{3}=(\lambda
_{2}\mu _{2}+\lambda _{3}\mu _{3}+\lambda _{4}\mu _{4})\lambda _{3},
\label{eq2}
\end{equation}%
\begin{equation}
(\lambda _{2}^{2}+\lambda _{3}^{2}+\lambda _{4}^{2})\mu _{4}=(\lambda
_{2}\mu _{2}+\lambda _{3}\mu _{3}+\lambda _{4}\mu _{4})\lambda _{4}.
\label{eq3}
\end{equation}

If we multiply (\ref{eq1}) with $\lambda _{3}$ and use equation (\ref{eq2}),
we find $\lambda _{2}\mu _{3}-\lambda _{3}\mu _{2}=0$. Multiplying (\ref{eq1}%
) with $\lambda _{4}$ and using equation (\ref{eq3}), we have $\lambda
_{2}\mu _{4}-\lambda _{4}\mu _{2}=0$. Conversely, it is easy to show that
equation (\ref{eqGAW4}) is satisfied if $\lambda _{2}\mu _{3}-\lambda
_{3}\mu _{2}=0$ and $\lambda _{2}\mu _{4}-\lambda _{4}\mu _{2}=0$.

v) Let $\gamma $ be of $GAW(5)$-type. Then, in view of equations (\ref%
{eqGAW5}) and (\ref{Normalparts}), we can write%
\begin{equation*}
\mu _{2}v_{2}+\mu _{3}v_{3}+\mu _{4}v_{4}=a_{1}(\kappa
_{1}v_{2})+b_{1}(\kappa _{1}^{\prime }v_{2}+\kappa _{1}\kappa _{2}v_{3})%
\text{,}
\end{equation*}%
which gives us $\mu _{2}=a_{1}\kappa _{1}+b_{1}\kappa _{1}^{\prime }$, $\mu
_{3}=b_{1}\kappa _{1}\kappa _{2}$ and $\mu _{4}=0$. Conversely, if these
last three equations are satisfied, one can show that $%
N_{4}=a_{1}N_{1}+b_{1}N_{2}$.

vi) Let $\gamma $ be of $GAW(6)$-type. By definition, we have $%
N_{4}=a_{2}N_{1}+b_{2}N_{3}$, that is,%
\begin{equation*}
\mu _{2}v_{2}+\mu _{3}v_{3}+\mu _{4}v_{4}=a_{2}(\kappa
_{1}v_{2})+b_{2}(\lambda _{2}v_{2}+\lambda _{3}v_{3}+\lambda _{4}v_{4}).
\end{equation*}%
Since $v_{2},$ $v_{3}$ and $v_{4}$ are linearly independent, we can write%
\begin{equation*}
\mu _{2}=a_{2}\kappa _{1}+b_{2}\lambda _{2},
\end{equation*}%
\begin{equation*}
\mu _{3}=b_{2}\lambda _{3},
\end{equation*}%
\begin{equation*}
\mu _{4}=b_{2}\lambda _{4}.
\end{equation*}%
Conversely, if these last equations are satisfied, then we easily show that $%
N_{4}=a_{2}N_{1}+b_{2}N_{3}$.

vii) Let $\gamma $ be of $GAW(7)$-type. Then using equations (\ref%
{Normalparts}) and (\ref{eqGAW7}), we obtain%
\begin{equation*}
\mu _{2}v_{2}+\mu _{3}v_{3}+\mu _{4}v_{4}=a_{3}(\kappa _{1}^{\prime
}v_{2}+\kappa _{1}\kappa _{2}v_{3})+b_{3}(\lambda _{2}v_{2}+\lambda
_{3}v_{3}+\lambda _{4}v_{4})\text{.}
\end{equation*}%
Thus%
\begin{equation*}
\mu _{2}=a_{3}\kappa _{1}^{\prime }+b_{3}\lambda _{2},
\end{equation*}%
\begin{equation*}
\mu _{3}=a_{3}\kappa _{1}\kappa _{2}+b_{3}\lambda _{3},
\end{equation*}%
\begin{equation*}
\mu _{4}=b_{3}\lambda _{4}.
\end{equation*}%
Conversely, let $\gamma $ be a curve satisfying the last three equations. It
is easily found that $N_{4}=a_{3}N_{2}+b_{3}N_{3}.$
\end{proof}

From now on, we consider Frenet curves whose first curvature $\kappa _{1}$
is a constant. We give curvature conditions of such a curve to be of $GAW(k)$%
-type. We can state following propositions:

\begin{proposition}
Let $\gamma :I\subseteq 
%TCIMACRO{\U{211d} }%
%BeginExpansion
\mathbb{R}
%EndExpansion
\rightarrow \mathbb{E}^{n}$ be a unit speed Frenet curve of osculating order 
$d\leq 4$ with $\kappa _{1}=$constant. Then $\gamma $ is of $GAW(1)$-type if
and only if it is a straight line or a circle.
\end{proposition}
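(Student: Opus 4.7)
The plan is to invoke part (i) of Theorem \ref{maintheorem}, which reduces the problem to the equivalence
\[
\gamma \text{ is of } GAW(1)\text{-type} \iff \mu_2 = \mu_3 = \mu_4 = 0,
\]
and then to substitute the hypothesis $\kappa_1' = \kappa_1'' = \kappa_1''' = 0$ into the definitions (\ref{nus}) of $\mu_2,\mu_3,\mu_4$ and read off the resulting differential conditions on $\kappa_2$ and $\kappa_3$.

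After the substitution, the expressions collapse to
\[
\mu_2 = -3\kappa_1\kappa_2\kappa_2', \qquad \mu_4 = \kappa_1\bigl(2\kappa_2'\kappa_3 + \kappa_2\kappa_3'\bigr),
\]
\[
\mu_3 = \kappa_1\bigl(\kappa_2'' - \kappa_2(\kappa_1^2 + \kappa_2^2 + \kappa_3^2)\bigr).
\]
I would then split into two cases according to the value of the constant $\kappa_1$. If $\kappa_1 = 0$, then $\gamma'' \equiv 0$, forcing $d=1$ and identifying $\gamma$ as a straight line; all three $\mu_i$ vanish automatically, so the $GAW(1)$ condition holds. This handles one half of the equivalence trivially.

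The substantive case is $\kappa_1 \neq 0$ (constant). Here $\mu_2 = 0$ immediately gives $\kappa_2 \kappa_2' = 0$ on $I$. The key step — which I expect to be the main obstacle, since it requires a careful continuity argument — is ruling out the possibility that $\kappa_2$ is a nonzero constant on any subinterval. Assuming $\kappa_2 \not\equiv 0$, there is an open subinterval $J \subset I$ on which $\kappa_2 \neq 0$, and thus $\kappa_2' = 0$, so $\kappa_2$ is a nonzero constant on $J$ and $\kappa_2'' = 0$ there. Plugging into $\mu_3 = 0$ yields $\kappa_2(\kappa_1^2 + \kappa_2^2 + \kappa_3^2) = 0$ on $J$, which is impossible because $\kappa_1 \neq 0$ and $\kappa_2 \neq 0$ on $J$. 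Therefore $\kappa_2 \equiv 0$ on $I$, which automatically forces $\mu_4 = 0$ as well and reduces the osculating order to $d = 2$. A unit speed planar curve with constant nonzero curvature is a circle, which is the desired conclusion.

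For the converse, it suffices to check directly that a straight line ($\kappa_1 \equiv 0$) and a circle ($\kappa_1$ constant nonzero, $\kappa_2 \equiv 0$) both annihilate $\mu_2,\mu_3,\mu_4$, so that Theorem \ref{maintheorem}(i) applies. This is immediate from the reduced formulas above, so the converse requires no further work.
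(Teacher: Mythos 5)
Your proposal is correct and follows essentially the same route as the paper: reduce via Theorem \ref{maintheorem}(i) to $\mu_2=\mu_3=\mu_4=0$, use $\mu_2=0$ to force $\kappa_2'=0$ when $\kappa_1\neq 0$ and $\kappa_2\neq 0$, and then derive the contradiction $\kappa_1^2+\kappa_2^2+\kappa_3^2=0$ from $\mu_3=0$. Your localization to a subinterval where $\kappa_2\neq 0$ is a minor tightening of the paper's argument, not a different method.
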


\begin{proof}
Let $\gamma $ be of $GAW(1)$-type. Since $\kappa _{1}=$constant, using (\ref%
{nus}) and Theorem \ref{maintheorem}, we find%
\begin{equation}
\mu _{2}=-3\kappa _{1}\kappa _{2}\kappa _{2}^{\prime }=0,  \label{equ1}
\end{equation}%
\begin{equation}
\mu _{3}=-\kappa _{1}^{3}\kappa _{2}-\kappa _{1}\kappa _{2}^{3}+\kappa
_{1}\kappa _{2}^{\prime \prime }-\kappa _{1}\kappa _{2}\kappa _{3}^{2}=0,
\label{equ2}
\end{equation}%
\begin{equation}
\mu _{4}=2\kappa _{1}\kappa _{2}^{\prime }\kappa _{3}+\kappa _{1}\kappa
_{2}\kappa _{3}^{\prime }=0.  \label{equ3}
\end{equation}

If $\kappa _{1}=0$, then $\gamma $ is a straight line and above three
equations are satisfied. Let $\kappa _{1}$ be a non-zero constant. If $%
\kappa _{2}=0$, then $\gamma $ is a circle and equations (\ref{equ1}), (\ref%
{equ2}) and (\ref{equ3}) are satisfied again. Assume that $\kappa _{2}\neq 0$%
. Then (\ref{equ1}) gives us $\kappa _{2}^{\prime }=0$, that is, $\kappa
_{2} $ is a constant. In this case, from equation (\ref{equ2}), we get $%
(\kappa _{1}^{2}+\kappa _{2}^{2}+\kappa _{3}^{2})=0$, which means $\kappa
_{1}=\kappa _{2}=\kappa _{3}=0$. This is a contradiction. So $\kappa _{2}=0$.

Conversely, let $\gamma $ be a straight line or a circle. Thus $\kappa
_{1}=0 $; or $\kappa _{1}=$constant and $\kappa _{2}=0$. So $\mu _{2}=\mu
_{3}=\mu _{4}=0$, which completes the proof.
\end{proof}

\begin{proposition}
Let $\gamma :I\subseteq 
%TCIMACRO{\U{211d} }%
%BeginExpansion
\mathbb{R}
%EndExpansion
\rightarrow \mathbb{E}^{n}$ be a unit speed Frenet curve of osculating order 
$d\leq 4$ with $\kappa _{1}=$constant. Then $\gamma $ is of $GAW(2)$-type if
and only if

i) it is a straight line; or

ii) it is a circle; or

iii) it is a helix of order $3$ or $4$.
\end{proposition}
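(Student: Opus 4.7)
The plan is to apply part (ii) of Theorem \ref{maintheorem} directly and then perform a case analysis on the vanishing of the curvatures, exploiting the hypothesis $\kappa_1'=0$ to simplify the $\mu_i$ drastically.

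First, specialize the formulas in (\ref{nus}) to the case $\kappa_1=$constant, so that $\kappa_1'=0$. This gives
\[
\mu_2=-3\kappa_1\kappa_2\kappa_2',\qquad
\mu_4=2\kappa_1\kappa_2'\kappa_3+\kappa_1\kappa_2\kappa_3'.
\]
By Theorem \ref{maintheorem}(ii), $\gamma$ is of $GAW(2)$-type iff $\mu_4=0$ and $\kappa_1\kappa_2\mu_2-\kappa_1'\mu_3=0$. Since $\kappa_1'=0$, the second condition reduces to $\kappa_1\kappa_2\mu_2=0$, i.e.
\[
-3\kappa_1^2\kappa_2^2\kappa_2'=0.
\]

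Next I would split into cases. If $\kappa_1=0$, then $\gamma$ is a straight line and both conditions are trivially satisfied, giving case (i). Assume $\kappa_1\neq 0$. Then the displayed equation forces $\kappa_2=0$ or $\kappa_2'=0$. If $\kappa_2=0$, the osculating order is $d=2$ and $\gamma$ is a circle, giving case (ii). Otherwise $\kappa_2$ is a non-zero constant; substituting $\kappa_2'=0$ into $\mu_4=0$ yields $\kappa_1\kappa_2\kappa_3'=0$, so $\kappa_3'=0$, i.e.\ $\kappa_3$ is constant as well. According to whether $d=3$ (so $\kappa_3$ does not appear) or $d=4$ (in which case $\kappa_3$ must be a non-zero constant), $\gamma$ is a helix of order $3$ or of order $4$, giving case (iii).

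For the converse, I would simply observe that in each of the three cases all the relevant $\kappa_i$ are constant (with the appropriate ones being zero), so that $\kappa_2'=\kappa_3'=0$ whenever these quantities are defined, which immediately yields $\mu_2=0$ and $\mu_4=0$; then $\mu_4=0$ and $\kappa_1\kappa_2\mu_2-\kappa_1'\mu_3=\kappa_1\kappa_2\cdot 0-0\cdot\mu_3=0$ hold, so the condition in Theorem \ref{maintheorem}(ii) is fulfilled. There is no real obstacle here; the only point requiring a little care is keeping track of which curvatures are non-zero for each osculating order $d\in\{1,2,3,4\}$, so that the labels ``straight line'', ``circle'' and ``helix of order $3$ or $4$'' match the configurations produced by the case analysis.
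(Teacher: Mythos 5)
Your proposal is correct and follows essentially the same route as the paper: both specialize Theorem \ref{maintheorem}(ii) to $\kappa_1'=0$, reducing the conditions to $\mu_4=0$ and $-3\kappa_1^2\kappa_2^2\kappa_2'=0$, and then read off that $\kappa_2$ and $\kappa_3$ must be constant. Your version merely spells out the case analysis that the paper compresses into ``one can easily see.''
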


\begin{proof}
Let $\gamma $ be of $GAW(2)$-type. Since $\kappa _{1}=$constant, using (\ref%
{nus}) and Theorem \ref{maintheorem}, we obtain%
\begin{equation}
\mu _{4}=2\kappa _{1}\kappa _{2}^{\prime }\kappa _{3}+\kappa _{1}\kappa
_{2}\kappa _{3}^{\prime }=0,  \label{equa1}
\end{equation}%
\begin{equation}
\kappa _{1}\kappa _{2}\left( -3\kappa _{1}\kappa _{2}\kappa _{2}^{\prime
}\right) =0.  \label{equa2}
\end{equation}%
One can easily see that $\kappa _{2}$ and $\kappa _{3}$ must be constants.
Thus, $\gamma $ can be a straight line, a circle or a helix of order $3$ or $%
4$. Conversely, if $\gamma $ is one of these curves, the proof is clear
using Theorem \ref{maintheorem}.
\end{proof}

\begin{proposition}
Let $\gamma :I\subseteq 
%TCIMACRO{\U{211d} }%
%BeginExpansion
\mathbb{R}
%EndExpansion
\rightarrow \mathbb{E}^{n}$ be a unit speed Frenet curve of osculating order 
$d\leq 4$ with $\kappa _{1}=$constant. Then $\gamma $ is of $GAW(3)$-type if
and only if

i) it is a straight line; or

ii) it is a circle; or

iii) it is a Frenet curve of osculating order $3$ satisfying the second
order non-linear ODE 
\begin{equation*}
\kappa _{2}^{\prime \prime }=\kappa _{2}(\kappa _{1}^{2}+\kappa _{2}^{2})%
\text{; or}
\end{equation*}

iv) it is a Frenet curve of osculating order $4$ with%
\begin{equation*}
\kappa _{2}=\frac{c}{\sqrt{\kappa _{3}}}
\end{equation*}%
and its third curvature satisfies the second order non-linear ODE%
\begin{equation}
\kappa _{3}^{\prime \prime }-\frac{3(\kappa _{3}^{\prime })^{2}}{2\kappa _{3}%
}+2\kappa _{3}(\kappa _{1}^{2}+\kappa _{3}^{2})+2c^{2}=0,  \label{ODE}
\end{equation}%
and where $c>0$ is an arbitrary constant.
\end{proposition}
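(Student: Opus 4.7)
The plan is to invoke Theorem \ref{maintheorem}(iii), which reduces being of $GAW(3)$-type to the two scalar conditions $\mu_3=0$ and $\mu_4=0$, and then to specialize these to $\kappa_1=\text{const}$ and split into cases according to the osculating order. From (\ref{nus}) with $\kappa_1'=0$ the conditions become
\begin{equation*}
\mu_3 \;=\; \kappa_1\!\left(\kappa_2'' - \kappa_2\kappa_1^{2} - \kappa_2^{3} - \kappa_2\kappa_3^{2}\right) = 0,
\qquad
\mu_4 \;=\; \kappa_1\kappa_3\!\left(2\kappa_2' + \kappa_2\frac{\kappa_3'}{\kappa_3}\right)\kappa_3^{0} = \kappa_1\bigl(2\kappa_2'\kappa_3+\kappa_2\kappa_3'\bigr) = 0.
\end{equation*}

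I would then organize the argument by cases on which of $\kappa_1,\kappa_2,\kappa_3$ vanish, since each factor that vanishes trivially kills one of the conditions. If $\kappa_1=0$ then $\gamma$ is a straight line and both equations hold, giving case (i). If $\kappa_1\neq 0$ but $\kappa_2\equiv 0$ the curve is a circle, giving case (ii). Suppose now $\kappa_1,\kappa_2$ are both nonzero with $\kappa_1$ constant; I split on whether the osculating order is $3$ or $4$. When $d=3$ we have $\kappa_3\equiv 0$, so $\mu_4\equiv 0$ automatically, and $\mu_3=0$ collapses (after dividing by $\kappa_1$) to the non-linear ODE $\kappa_2''=\kappa_2(\kappa_1^2+\kappa_2^2)$ claimed in case (iii).

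The only computationally nontrivial case is $d=4$, where $\kappa_3\neq 0$. Here $\mu_4=0$ reads $2\kappa_2'\kappa_3+\kappa_2\kappa_3'=0$, which is $\bigl(\kappa_2^{2}\kappa_3\bigr)'=0$; integrating gives $\kappa_2^{2}\kappa_3=c^{2}$ for some positive constant $c$, equivalently $\kappa_2 = c/\sqrt{\kappa_3}$, as claimed. The remaining condition $\mu_3=0$, i.e.\ $\kappa_2''=\kappa_2(\kappa_1^{2}+\kappa_2^{2}+\kappa_3^{2})$, has to be rewritten purely in terms of $\kappa_3$. Using $\kappa_2=c\,\kappa_3^{-1/2}$ I differentiate twice,
\begin{equation*}
\kappa_2' = -\tfrac{c}{2}\,\kappa_3^{-3/2}\kappa_3',\qquad
\kappa_2'' = \tfrac{3c}{4}\,\kappa_3^{-5/2}(\kappa_3')^{2} - \tfrac{c}{2}\,\kappa_3^{-3/2}\kappa_3'',
\end{equation*}
substitute into $\mu_3=0$, and multiply through by $-2\kappa_3^{3/2}/c$. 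This is exactly where the algebra must be carried out carefully; after simplification it yields
\begin{equation*}
\kappa_3'' - \frac{3(\kappa_3')^{2}}{2\kappa_3} + 2\kappa_3(\kappa_1^{2}+\kappa_3^{2}) + 2c^{2} = 0,
\end{equation*}
which is precisely (\ref{ODE}). This ODE-manipulation step is the main (and really only) technical obstacle of the proof.

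For the converse direction in every case, one simply plugs the stated curvature conditions back into the formulas (\ref{nus}) for $\mu_3$ and $\mu_4$ (with $\kappa_1'=0$) and checks that both vanish; in case (iv) one verifies first that $\kappa_2^{2}\kappa_3=c^{2}$ implies $\mu_4=0$, and then that (\ref{ODE}) together with $\kappa_2=c/\sqrt{\kappa_3}$ reverses the computation above to give $\mu_3=0$. By Theorem \ref{maintheorem}(iii) this completes the characterization.
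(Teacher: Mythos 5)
Your proposal is correct and follows essentially the same route as the paper: reduce to $\mu_3=\mu_4=0$ via Theorem \ref{maintheorem}(iii), split by osculating order, integrate $2\kappa_2'\kappa_3+\kappa_2\kappa_3'=0$ to get $\kappa_2^2\kappa_3=c^2$, and substitute $\kappa_2=c\kappa_3^{-1/2}$ into $\mu_3=0$ to obtain (\ref{ODE}). The only (immaterial) differences are your cleaner observation that $\mu_4=0$ is equivalent to $(\kappa_2^2\kappa_3)'=0$ and a slightly different normalizing factor in the final substitution.
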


\begin{proof}
Let $\gamma $ be of $GAW(3)$-type. Since $\kappa _{1}=$constant, using (\ref%
{nus}) and Theorem \ref{maintheorem}, we have%
\begin{equation}
\mu _{3}=-\kappa _{1}^{3}\kappa _{2}-\kappa _{1}\kappa _{2}^{3}+\kappa
_{1}\kappa _{2}^{\prime \prime }-\kappa _{1}\kappa _{2}\kappa _{3}^{2}=0,
\label{equ4}
\end{equation}%
\begin{equation}
\mu _{4}=2\kappa _{1}\kappa _{2}^{\prime }\kappa _{3}+\kappa _{1}\kappa
_{2}\kappa _{3}^{\prime }=0.  \label{equ5}
\end{equation}%
If $d=1$ or $d=2$, we obtain line and circle cases, both of which do not
conradict above two equations. Let $d=3.$ Then $\kappa _{1}=$constant$>0$, $%
\kappa _{2}>0$ and $\kappa _{3}=0$. (\ref{equ5}) is satisfied directly and (%
\ref{equ4}) gives us%
\begin{equation*}
\kappa _{2}^{\prime \prime }=\kappa _{2}(\kappa _{1}^{2}+\kappa _{2}^{2}),
\end{equation*}%
which is a second order non-linear ODE. Now, let $d=4$. Thus, $\kappa _{1}=$%
constant$>0$, $\kappa _{2}>0$ and $\kappa _{3}>0$. If we solve (\ref{equ5}),
we find 
\begin{equation}
\kappa _{2}=\frac{c}{\sqrt{\kappa _{3}}},  \label{equ6}
\end{equation}%
where $c>0$ is an arbitrary constant. Then 
\begin{equation*}
\kappa _{2}^{\prime }=\frac{-c\kappa _{3}^{\prime }}{2\kappa _{3}^{3/2}},
\end{equation*}%
\begin{equation}
\kappa _{2}^{\prime \prime }=c.\left[ \frac{3(\kappa _{3}^{\prime })^{2}}{%
4\kappa _{3}^{5/2}}-\frac{\kappa _{3}^{\prime \prime }}{2\kappa _{3}^{3/2}}%
\right] .  \label{equ7}
\end{equation}%
If we multiply equation (\ref{equ4}) with $\frac{\kappa _{2}}{\kappa _{1}}$,
using (\ref{equ6}) and (\ref{equ7}), we obtain the second order non-linear
ODE (\ref{ODE}). Conversely, if $\gamma $ is one of these curves, one can
show that $\mu _{3}=\mu _{4}=0.$
\end{proof}

\begin{proposition}
Let $\gamma :I\subseteq 
%TCIMACRO{\U{211d} }%
%BeginExpansion
\mathbb{R}
%EndExpansion
\rightarrow \mathbb{E}^{n}$ be a unit speed Frenet curve of osculating order 
$d\leq 4$ with $\kappa _{1}=$constant. Then $\gamma $ is of $GAW(4)$-type if
and only if

i) it is a straight line; or

ii) it is a circle; or

iii) it is a Frenet curve of osculating order $3$ satisfying the second
order non-linear ODE 
\begin{equation}
3\kappa _{2}(\kappa _{2}^{\prime })^{2}=(\kappa _{1}^{2}+\kappa _{2}^{2}) 
\left[ \kappa _{2}^{\prime \prime }-\kappa _{2}(\kappa _{1}^{2}+\kappa
_{2}^{2})\right] \text{; or}  \label{eq10}
\end{equation}

iv) it is a Frenet curve of osculating order $4$ with%
\begin{equation}
\kappa _{2}^{2}\kappa _{3}=c.\left( \kappa _{1}^{2}+\kappa _{2}^{2}\right)
^{3/2}  \label{e11}
\end{equation}%
and its curvatures satify%
\begin{equation*}
3\kappa _{2}(\kappa _{2}^{\prime })^{2}=(\kappa _{1}^{2}+\kappa _{2}^{2}) 
\left[ \kappa _{2}^{\prime \prime }-\kappa _{2}(\kappa _{1}^{2}+\kappa
_{2}^{2}+\kappa _{3}^{2})\right] .
\end{equation*}%
Here, $c>0$ is an arbitrary constant.
\end{proposition}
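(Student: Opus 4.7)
The plan is to apply Theorem \ref{maintheorem}(iv), which reduces the $GAW(4)$-condition to the pair of identities
\begin{equation*}
\lambda_2\mu_3 - \lambda_3\mu_2 = 0, \qquad \lambda_2\mu_4 - \lambda_4\mu_2 = 0,
\end{equation*}
and then to simplify these under the hypothesis $\kappa_1=\mathrm{const}$. Since $\kappa_1'=\kappa_1''=\kappa_1'''=0$, the formulas (\ref{lamdas}) and (\ref{nus}) collapse to
\begin{equation*}
\lambda_2 = -\kappa_1(\kappa_1^2+\kappa_2^2), \quad \lambda_3 = \kappa_1\kappa_2', \quad \lambda_4 = \kappa_1\kappa_2\kappa_3,
\end{equation*}
\begin{equation*}
\mu_2 = -3\kappa_1\kappa_2\kappa_2', \quad \mu_3 = \kappa_1\bigl[\kappa_2''-\kappa_2(\kappa_1^2+\kappa_2^2+\kappa_3^2)\bigr], \quad \mu_4 = \kappa_1(2\kappa_2'\kappa_3+\kappa_2\kappa_3').
\end{equation*}

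The next step is a case analysis on the osculating order $d$. If $d=1$ then $\kappa_1\equiv 0$ and $\gamma$ is a straight line; if $d=2$ then $\kappa_2\equiv 0$ and $\gamma$ is a circle; in both situations every $\lambda_i$ and $\mu_i$ vanishes and the two identities hold trivially. If $d=3$ then $\kappa_3\equiv 0$, which forces $\lambda_4=\mu_4=0$ and makes the second identity automatic, while after cancelling the positive factor $\kappa_1^2$ the first identity becomes precisely the ODE in (iii).

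The interesting case is $d=4$, where $\kappa_1,\kappa_2,\kappa_3$ are all positive. Substituting the simplified expressions into $\lambda_2\mu_4-\lambda_4\mu_2=0$ and cancelling $\kappa_1^2$ yields
\begin{equation*}
(\kappa_1^2+\kappa_2^2)(2\kappa_2'\kappa_3+\kappa_2\kappa_3') = 3\kappa_2^2\kappa_3\kappa_2'.
\end{equation*}
The key observation is that dividing by the positive quantity $\kappa_2^2\kappa_3(\kappa_1^2+\kappa_2^2)$ rewrites this as the logarithmic-derivative identity
\begin{equation*}
\frac{2\kappa_2'}{\kappa_2} + \frac{\kappa_3'}{\kappa_3} = \frac{3\kappa_2\kappa_2'}{\kappa_1^2+\kappa_2^2},
\end{equation*}
which integrates at once to $\kappa_2^2\kappa_3 = c(\kappa_1^2+\kappa_2^2)^{3/2}$ for an arbitrary constant $c>0$; this is exactly (\ref{e11}). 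Substituting into $\lambda_2\mu_3-\lambda_3\mu_2=0$ and again cancelling $\kappa_1^2$ produces the companion ODE of (iv), which has the same shape as (\ref{eq10}) but now with $\kappa_3^2$ retained in the bracket.

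The main obstacle I expect is spotting that the $\mu_4$-condition is exact after the division above, so that integration cleanly produces the algebraic relation (\ref{e11}); without this step one is left with an unintegrated first-order nonlinear ODE coupling $\kappa_2$ and $\kappa_3$. The converse in each of the four cases is a direct back-substitution verifying both $\lambda_2\mu_3=\lambda_3\mu_2$ and $\lambda_2\mu_4=\lambda_4\mu_2$, after which Theorem \ref{maintheorem}(iv) again supplies the $GAW(4)$ property.
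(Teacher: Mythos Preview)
Your proposal is correct and follows essentially the same route as the paper: both invoke Theorem~\ref{maintheorem}(iv), specialize the $\lambda_i,\mu_i$ under $\kappa_1=\text{const}$, and treat the cases $d=1,2,3,4$ separately, integrating the $\lambda_2\mu_4-\lambda_4\mu_2=0$ condition in the $d=4$ case to obtain the algebraic relation (\ref{e11}). The only cosmetic difference is that the paper performs the integration via the partial-fraction decomposition $\int\frac{2\kappa_1^2-\kappa_2^2}{\kappa_2(\kappa_1^2+\kappa_2^2)}\,d\kappa_2+\int\frac{d\kappa_3}{\kappa_3}=\ln c$, while you recognize the logarithmic-derivative structure directly; these amount to the same computation. (One inconsequential slip: the divisor you quote should be $\kappa_2\kappa_3(\kappa_1^2+\kappa_2^2)$ rather than $\kappa_2^2\kappa_3(\kappa_1^2+\kappa_2^2)$, but the identity you arrive at is correct.)
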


\begin{proof}
Let $\gamma $ be of $GAW(4)$-type. Since $\kappa _{1}=$constant, using (\ref%
{lamdas}), (\ref{nus}) and Theorem \ref{maintheorem}, we find%
\begin{equation}
(-\kappa _{1}^{3}-\kappa _{1}\kappa _{2}^{2})(-\kappa _{1}^{3}\kappa
_{2}-\kappa _{1}\kappa _{2}^{3}+\kappa _{1}\kappa _{2}^{\prime \prime
}-\kappa _{1}\kappa _{2}\kappa _{3}^{2})-(\kappa _{1}\kappa _{2}^{\prime
})(-3\kappa _{1}\kappa _{2}\kappa _{2}^{\prime })=0,  \label{equ8}
\end{equation}%
\begin{equation}
(-\kappa _{1}^{3}-\kappa _{1}\kappa _{2}^{2})(2\kappa _{1}\kappa
_{2}^{\prime }\kappa _{3}+\kappa _{1}\kappa _{2}\kappa _{3}^{\prime
})-(\kappa _{1}\kappa _{2}\kappa _{3})(-3\kappa _{1}\kappa _{2}\kappa
_{2}^{\prime })=0.  \label{equ9}
\end{equation}%
(\ref{equ8}) and (\ref{equ9}) give us%
\begin{equation}
3\kappa _{2}(\kappa _{2}^{\prime })^{2}=(\kappa _{1}^{2}+\kappa _{2}^{2}) 
\left[ \kappa _{2}^{\prime \prime }-\kappa _{2}(\kappa _{1}^{2}+\kappa
_{2}^{2}+\kappa _{3}^{2})\right] \text{,}  \label{equ10}
\end{equation}%
\begin{equation}
(2\kappa _{1}^{2}-\kappa _{2}^{2})\kappa _{3}\kappa _{2}^{\prime }+\kappa
_{2}(\kappa _{1}^{2}+\kappa _{2}^{2})\kappa _{3}^{\prime }=0\text{.}
\label{equ11}
\end{equation}%
Now, if $\kappa _{1}=0$, then $\gamma $ is a straight line and equations (%
\ref{equ8}) and (\ref{equ9}) are satisfied. Let $\kappa _{1}$ be a non-zero
constant. If $\kappa _{2}=0$, then $\gamma $ is a circle. Let $\kappa _{2}>0$
and $\kappa _{3}=0$. Then, from equation (\ref{equ10}), we obtain (\ref{eq10}%
). Now, let $d=4$. Then, using equation (\ref{equ11}), we can write%
\begin{equation*}
\int \frac{(2\kappa _{1}^{2}-\kappa _{2}^{2})}{\kappa _{2}(\kappa
_{1}^{2}+\kappa _{2}^{2})}d\kappa _{2}+\int \frac{1}{\kappa _{3}}d\kappa
_{3}=\ln c\text{,}
\end{equation*}%
where $c>0$ is an arbitrary constant. Remember that $\kappa _{1}>0$ is a
constant. So we find%
\begin{equation*}
2\ln (\kappa _{2})-\frac{3}{2}\ln (\kappa _{1}^{2}+\kappa _{2}^{2})+\ln
(\kappa _{3})=\ln c\text{,}
\end{equation*}%
which gives us (\ref{e11}). Furthermore, $\gamma $ must also satisfy (\ref%
{equ10}). Conversely, if $\gamma $ is one of the curves above, we can show
that (\ref{equ8}) and (\ref{equ9}) are satisfied.
\end{proof}

\begin{proposition}
\label{propGAW5}Let $\gamma :I\subseteq 
%TCIMACRO{\U{211d} }%
%BeginExpansion
\mathbb{R}
%EndExpansion
\rightarrow \mathbb{E}^{n}$ be a unit speed Frenet curve of osculating order 
$d\leq 4$ with $\kappa _{1}=$constant. Then $\gamma $ is of $GAW(5)$-type if
and only if

i) it is a straight line; or

ii) it is a Frenet curve of osculating order $3$ with 
\begin{equation*}
\kappa _{2}\neq \text{constant}
\end{equation*}
and 
\begin{equation*}
\kappa _{2}^{\prime \prime }\neq \kappa _{2}\left( \kappa _{1}^{2}+\kappa
_{2}^{2}\right) \text{; or}
\end{equation*}

iii) it is a Frenet curve of osculating order $4$ with%
\begin{equation*}
\kappa _{2}\neq \text{constant, }\kappa _{3}\neq \text{constant,}
\end{equation*}%
\begin{equation*}
\kappa _{2}^{\prime \prime }\neq \kappa _{2}\left( \kappa _{1}^{2}+\kappa
_{2}^{2}+\kappa _{3}^{2}\right)
\end{equation*}%
and%
\begin{equation*}
\kappa _{2}=\frac{c}{\sqrt{\kappa _{3}}},
\end{equation*}%
where $c>0$ is an arbitrary constant.
\end{proposition}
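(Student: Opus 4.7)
The plan is to combine Theorem \ref{maintheorem}(v) with the constancy of $\kappa_1$ and then split into subcases by the osculating order $d$. Since $\kappa_1$ is constant we have $\kappa_1' = \kappa_1'' = \kappa_1''' = 0$, so (\ref{nus}) collapses to
\begin{equation*}
\mu_2 = -3\kappa_1\kappa_2\kappa_2', \qquad \mu_3 = \kappa_1\bigl(\kappa_2'' - \kappa_2(\kappa_1^2+\kappa_2^2+\kappa_3^2)\bigr), \qquad \mu_4 = \kappa_1(2\kappa_2'\kappa_3 + \kappa_2\kappa_3').
\end{equation*}
Under these reductions, the $GAW(5)$ characterization of Theorem \ref{maintheorem} amounts to $\mu_4=0$ together with $a_1 = \mu_2/\kappa_1$ and $b_1 = \mu_3/(\kappa_1\kappa_2)$, where both $a_1$ and $b_1$ must be non-zero differentiable functions.

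Next I would treat the four possible values of $d$ in turn. For $d=1$ the curve is a straight line, all $N_i$ vanish, and any non-zero $a_1,b_1$ trivially satisfy $N_4 = a_1N_1 + b_1N_2$, giving (i). For $d=2$ the curve is a circle, and since $\kappa_1 \neq 0$ but $\kappa_2 = 0$ one would be forced to take $a_1 = \mu_2/\kappa_1 = 0$, which is excluded by definition; this is why no circle case appears in the statement. For $d=3$ we have $\kappa_3=0$, so $\mu_4=0$ is automatic, and demanding that $a_1 = -3\kappa_2\kappa_2'$ and $b_1 = (\kappa_2''-\kappa_2(\kappa_1^2+\kappa_2^2))/\kappa_2$ both be non-zero yields exactly the two inequalities of (ii).

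In the remaining case $d=4$, the identity $\mu_4=0$ becomes $2\kappa_2'\kappa_3 + \kappa_2\kappa_3' = 0$; multiplying by $\kappa_2$ rewrites this as $(\kappa_2^2\kappa_3)' = 0$, so $\kappa_2^2\kappa_3 = c^2$ for some constant $c>0$, giving $\kappa_2 = c/\sqrt{\kappa_3}$. Non-vanishing of $a_1 = -3\kappa_2\kappa_2'$ forces $\kappa_2$ to be non-constant, which by $\kappa_2 = c/\sqrt{\kappa_3}$ forces $\kappa_3$ non-constant as well, and non-vanishing of $b_1$ gives $\kappa_2'' \neq \kappa_2(\kappa_1^2+\kappa_2^2+\kappa_3^2)$, producing (iii). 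In each case the converse is a direct substitution back into the expressions for $\mu_2,\mu_3,\mu_4$.

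The argument is essentially reduction plus an elementary ODE integration, so no deep obstacle is expected. The step that requires the most care, and which I view as the main subtlety, is the bookkeeping of the requirement that $a_1$ and $b_1$ be \emph{non-zero} functions: it is precisely this condition that excludes the circle from the classification and that converts the bare curvature identity into the strict inequalities appearing in parts (ii) and (iii).
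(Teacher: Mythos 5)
Your proposal is correct and follows essentially the same route as the paper: substitute $\kappa_1=\text{constant}$ into the expressions for $\mu_2,\mu_3,\mu_4$, apply Theorem \ref{maintheorem}(v), split by osculating order, integrate $2\kappa_2'\kappa_3+\kappa_2\kappa_3'=0$ to get $\kappa_2=c/\sqrt{\kappa_3}$ when $d=4$, and use the non-vanishing of $a_1$ and $b_1$ to exclude the circle and to produce the strict inequalities in (ii) and (iii). No discrepancies with the paper's argument.
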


\begin{proof}
Let $\gamma $ be of $GAW(5)$-type. Since $\kappa _{1}=$constant, by the use
of Theorem \ref{maintheorem} and equations (\ref{nus}), we have%
\begin{equation}
-3\kappa _{1}\kappa _{2}\kappa _{2}^{\prime }=a_{1}\kappa _{1},  \label{e1}
\end{equation}%
\begin{equation}
-\kappa _{1}^{3}\kappa _{2}-\kappa _{1}\kappa _{2}^{3}+\kappa _{1}\kappa
_{2}^{\prime \prime }-\kappa _{1}\kappa _{2}\kappa _{3}^{2}=b_{1}\kappa
_{1}\kappa _{2},  \label{e2}
\end{equation}%
\begin{equation}
2\kappa _{1}\kappa _{2}^{\prime }\kappa _{3}+\kappa _{1}\kappa _{2}\kappa
_{3}^{\prime }=0.  \label{e3}
\end{equation}%
If $d=1$, then $\gamma $ is a straight line and above equations are
satisfied. If $d=2$, then $\gamma $ is a circle. From (\ref{e1}), we find $%
a_{1}=0$, which contradicts the definition. Now, let $d=3$. Then, using (\ref%
{e1}) and (\ref{e2}), we find%
\begin{equation*}
a_{1}=-3\kappa _{2}\kappa _{2}^{\prime }\text{,}
\end{equation*}%
\begin{equation*}
b_{1}=\frac{\kappa _{2}^{\prime \prime }}{\kappa _{2}}-\kappa
_{1}^{2}-\kappa _{2}^{2}\text{.}
\end{equation*}%
Since $a_{1}$ and $b_{1}$ are non-zero functions, then $\kappa _{2}\neq $%
constant and $\kappa _{2}^{\prime \prime }\neq \kappa _{2}\left( \kappa
_{1}^{2}+\kappa _{2}^{2}\right) $. Finally, let $d=4$. Then, equation (\ref%
{e3}) gives us%
\begin{equation}
\kappa _{2}=\frac{c}{\sqrt{\kappa _{3}}},  \label{e8}
\end{equation}%
where $c>0$ is an arbitrary constant. In this case, from (\ref{e1}) and (\ref%
{e2}), we find%
\begin{equation}
a_{1}=-3\kappa _{2}\kappa _{2}^{\prime }\text{,}  \label{e9}
\end{equation}%
\begin{equation}
b_{1}=\frac{\kappa _{2}^{\prime \prime }}{\kappa _{2}}-\kappa
_{1}^{2}-\kappa _{2}^{2}-\kappa _{3}^{2}\text{.}  \label{e6}
\end{equation}%
Thus, (\ref{e8}) and (\ref{e9}) give us 
\begin{equation}
\kappa _{2}\neq \text{constant, }\kappa _{3}\neq \text{constant.}
\label{e10}
\end{equation}%
Also, from (\ref{e6}), we can write%
\begin{equation}
\kappa _{2}^{\prime \prime }\neq \kappa _{2}\left( \kappa _{1}^{2}+\kappa
_{2}^{2}+\kappa _{3}^{2}\right) .  \label{e7}
\end{equation}%
Converse proposition is trivial.
\end{proof}

\begin{proposition}
Let $\gamma :I\subseteq 
%TCIMACRO{\U{211d} }%
%BeginExpansion
\mathbb{R}
%EndExpansion
\rightarrow \mathbb{E}^{n}$ be a unit speed Frenet curve of osculating order 
$d\leq 4$ with $\kappa _{1}=$constant. Then $\gamma $ is of $GAW(6)$-type if
and only if

i) it is a straight line; or

ii) it is a circle; or

iii) it is a Frenet curve of osculating order $3$ with 
\begin{equation*}
\kappa _{2}\neq \text{constant,}
\end{equation*}%
\begin{equation*}
\kappa _{2}^{\prime \prime }\neq \kappa _{2}\left( \kappa _{1}^{2}+\kappa
_{2}^{2}\right)
\end{equation*}%
and 
\begin{equation*}
\kappa _{2}^{\prime \prime }\neq \kappa _{2}\left( \kappa _{1}^{2}+\kappa
_{2}^{2}\right) +\frac{3\kappa _{2}\left( \kappa _{2}^{\prime }\right) ^{2}}{%
\kappa _{1}^{2}+\kappa _{2}^{2}}\text{; or}
\end{equation*}

iv) it is a Frenet curve of osculating order $4$ with%
\begin{equation*}
\kappa _{2}\neq \text{constant,}
\end{equation*}%
\begin{equation*}
\kappa _{2}\neq \frac{c}{\sqrt{\kappa _{3}}},
\end{equation*}%
\begin{equation*}
\frac{2\kappa _{2}^{\prime }}{\kappa _{2}}+\frac{\kappa _{3}^{\prime }}{%
\kappa _{3}}=\frac{\kappa _{2}^{\prime \prime }}{\kappa _{2}^{\prime }}-%
\frac{\kappa _{2}}{\kappa _{2}^{\prime }}(\kappa _{1}^{2}+\kappa
_{2}^{2}+\kappa _{3}^{2})
\end{equation*}%
and 
\begin{equation}
\kappa _{2}^{\prime \prime }\neq \kappa _{2}\left( \kappa _{1}^{2}+\kappa
_{2}^{2}+\kappa _{3}^{2}\right) +\frac{3\kappa _{2}\left( \kappa
_{2}^{\prime }\right) ^{2}}{\kappa _{1}^{2}+\kappa _{2}^{2}}\text{.}
\label{A1}
\end{equation}%
Here, $c>0$ is an arbitrary constant.
\end{proposition}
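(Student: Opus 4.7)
The approach is to specialize Theorem \ref{maintheorem}(vi) to the hypothesis $\kappa_1=$ constant (so $\kappa_1^{\prime}=\kappa_1^{\prime\prime}=\kappa_1^{\prime\prime\prime}=0$), and to analyze the resulting ODE system for each admissible osculating order $d\le 4$. Under this specialization, (\ref{lamdas}) collapses to $\lambda_2=-\kappa_1(\kappa_1^2+\kappa_2^2)$, $\lambda_3=\kappa_1\kappa_2^{\prime}$, $\lambda_4=\kappa_1\kappa_2\kappa_3$, and (\ref{nus}) collapses to $\mu_2=-3\kappa_1\kappa_2\kappa_2^{\prime}$, $\mu_3=\kappa_1\bigl[\kappa_2^{\prime\prime}-\kappa_2(\kappa_1^2+\kappa_2^2+\kappa_3^2)\bigr]$, and $\mu_4=\kappa_1(2\kappa_2^{\prime}\kappa_3+\kappa_2\kappa_3^{\prime})$. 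Substituting these into the three identities $\mu_2=a_2\kappa_1+b_2\lambda_2$, $\mu_3=b_2\lambda_3$, $\mu_4=b_2\lambda_4$ of Theorem \ref{maintheorem}(vi) turns the $GAW(6)$ characterization into a system whose solvability with nowhere-vanishing $a_2,b_2$ is what must be decided. I will do this case by case in $d$, and then translate the non-vanishing of $a_2,b_2$ into strict inequalities on $\kappa_2,\kappa_3$.

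The cases $d=1$ (line, $\kappa_1=0$) and $d=2$ (circle, $\kappa_2=0$) are immediate: in both, all three $\mu_j$ vanish, and for the circle $\lambda_3=\lambda_4=0$, so any $b_2\ne 0$ together with $a_2=b_2\kappa_1^2$ satisfies the three identities, yielding (i) and (ii).

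For $d=3$ (so $\kappa_3\equiv 0$, $\kappa_2>0$) the equation $\mu_4=b_2\lambda_4$ is automatic, while $\mu_3=b_2\lambda_3$ reads $\kappa_2^{\prime\prime}-\kappa_2(\kappa_1^2+\kappa_2^2)=b_2\kappa_2^{\prime}$. A constant $\kappa_2$ would force $\kappa_1^2+\kappa_2^2=0$, which is impossible; hence $\kappa_2\ne$ constant, and $b_2$ is explicitly determined, with $b_2\ne 0$ being precisely the second inequality in (iii). Then $\mu_2=a_2\kappa_1+b_2\lambda_2$ solves for $a_2=-3\kappa_2\kappa_2^{\prime}+b_2(\kappa_1^2+\kappa_2^2)$, and imposing $a_2\ne 0$ and substituting the expression for $b_2$ rearranges directly into the third inequality in (iii).

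The main work is the case $d=4$ ($\kappa_3>0$), where $\mu_4$ and $\lambda_4$ are both nonzero in general. Here the equation $\mu_4=b_2\lambda_4$ gives $b_2=(2\kappa_2^{\prime}/\kappa_2)+(\kappa_3^{\prime}/\kappa_3)=(\ln(\kappa_2^2\kappa_3))^{\prime}$, so $b_2\ne 0$ is equivalent to $\kappa_2^2\kappa_3$ not being constant, i.e.\ $\kappa_2\ne c/\sqrt{\kappa_3}$ for any positive constant $c$; a short check using the remaining equations also rules out $\kappa_2^{\prime}\equiv 0$. The equation $\mu_3=b_2\lambda_3$ then yields the second expression $b_2=[\kappa_2^{\prime\prime}-\kappa_2(\kappa_1^2+\kappa_2^2+\kappa_3^2)]/\kappa_2^{\prime}$, and equating these two expressions for $b_2$ produces precisely the compatibility ODE stated in (iv). Finally $\mu_2=a_2\kappa_1+b_2\lambda_2$ determines $a_2$, and the condition $a_2\ne 0$, after clearing $\kappa_2^{\prime}$ and $\kappa_1^2+\kappa_2^2$, rearranges into (\ref{A1}). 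The converse direction consists of reading each substitution backward: the stated inequalities ensure that the explicit formulas for $a_2$ and $b_2$ are well-defined and nowhere zero, and then the three identities of Theorem \ref{maintheorem}(vi) are verified by direct substitution. The main bookkeeping obstacle is to confirm that each division (by $\kappa_2^{\prime}$, by $\kappa_1^2+\kappa_2^2$, by $\kappa_2$, and by $\kappa_3$) is justified under the running hypotheses, so that no vanishing is tacitly used in either direction.
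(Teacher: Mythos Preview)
Your proof is correct and follows essentially the same approach as the paper's: specialize Theorem~\ref{maintheorem}(vi) under $\kappa_1=$ constant, then case-split on the osculating order $d$, solving the resulting system for $a_2,b_2$ and translating the non-vanishing of these coefficients into the stated inequalities. The paper's argument is organized identically; your additional remarks (the logarithmic-derivative interpretation $b_2=(\ln(\kappa_2^2\kappa_3))'$ and the explicit justification of each division) add clarity but do not change the route.
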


\begin{proof}
Let $\gamma $ be of $GAW(6)$-type. Since $\kappa _{1}=$constant, by the use
of equations (\ref{lamdas}), (\ref{nus}) and Theorem \ref{maintheorem}, we
have%
\begin{equation}
-3\kappa _{1}\kappa _{2}\kappa _{2}^{\prime }=a_{2}\kappa _{1}+b_{2}\left(
-\kappa _{1}^{3}-\kappa _{1}\kappa _{2}^{2}\right) ,  \label{EQ1}
\end{equation}%
\begin{equation}
-\kappa _{1}^{3}\kappa _{2}-\kappa _{1}\kappa _{2}^{3}+\kappa _{1}\kappa
_{2}^{\prime \prime }-\kappa _{1}\kappa _{2}\kappa _{3}^{2}=b_{2}\kappa
_{1}\kappa _{2}^{\prime },  \label{EQ2}
\end{equation}%
\begin{equation}
2\kappa _{1}\kappa _{2}^{\prime }\kappa _{3}+\kappa _{1}\kappa _{2}\kappa
_{3}^{\prime }=b_{2}\kappa _{1}\kappa _{2}\kappa _{3}.  \label{EQ3}
\end{equation}%
If $\kappa _{1}=0$, then $\gamma $ is a straight line. Let $d=2$. Then $%
\gamma $ is a circle and from (\ref{EQ1}), we obtain%
\begin{equation*}
a_{2}-b_{2}\kappa _{1}^{2}=0,
\end{equation*}%
which is satisfied for some $a_{2}$, $b_{2}$ non-zero differentiable
functions. (\ref{EQ2}) and (\ref{EQ3}) are also satisfied. Now, let $d=3$.
Then we have%
\begin{equation}
-3\kappa _{2}\kappa _{2}^{\prime }=a_{2}-b_{2}\left( \kappa _{1}^{2}+\kappa
_{2}^{2}\right) ,  \label{EQ4}
\end{equation}%
\begin{equation}
\kappa _{2}^{\prime \prime }-\kappa _{2}(\kappa _{1}^{2}+\kappa
_{2}^{2})=b_{2}\kappa _{2}^{\prime }.  \label{EQ5}
\end{equation}%
Thus $\kappa _{2}$ can not be constant. So (\ref{EQ4}) and (\ref{EQ5}) give
us 
\begin{equation*}
b_{2}=\frac{\kappa _{2}^{\prime \prime }}{\kappa _{2}^{\prime }}-\frac{%
\kappa _{2}}{\kappa _{2}^{\prime }}(\kappa _{1}^{2}+\kappa _{2}^{2}),
\end{equation*}%
\begin{equation*}
a_{2}=-3\kappa _{2}\kappa _{2}^{\prime }+\frac{\kappa _{2}^{\prime \prime }}{%
\kappa _{2}^{\prime }}(\kappa _{1}^{2}+\kappa _{2}^{2})-\frac{\kappa _{2}}{%
\kappa _{2}^{\prime }}(\kappa _{1}^{2}+\kappa _{2}^{2})^{2},
\end{equation*}%
both of which must be non-zero. Finally, let $d=4$. From (\ref{EQ2}), $%
\kappa _{2}\neq $constant. In this case, by the use of (\ref{EQ1}), (\ref%
{EQ2}) and (\ref{EQ3}), we obtain%
\begin{equation}
b_{2}=\frac{2\kappa _{2}^{\prime }}{\kappa _{2}}+\frac{\kappa _{3}^{\prime }%
}{\kappa _{3}}=\frac{\kappa _{2}^{\prime \prime }}{\kappa _{2}^{\prime }}-%
\frac{\kappa _{2}}{\kappa _{2}^{\prime }}(\kappa _{1}^{2}+\kappa
_{2}^{2}+\kappa _{3}^{2}),  \label{EQ6}
\end{equation}%
\begin{equation}
a_{2}=-3\kappa _{2}\kappa _{2}^{\prime }+\frac{\kappa _{2}^{\prime \prime }}{%
\kappa _{2}^{\prime }}(\kappa _{1}^{2}+\kappa _{2}^{2})-\frac{\kappa _{2}}{%
\kappa _{2}^{\prime }}(\kappa _{1}^{2}+\kappa _{2}^{2})(\kappa
_{1}^{2}+\kappa _{2}^{2}+\kappa _{3}^{2}).  \label{A2}
\end{equation}%
Thus, from equation (\ref{EQ6}), we have%
\begin{equation*}
\kappa _{2}\neq \frac{c}{\sqrt{\kappa _{3}}},
\end{equation*}%
where $c>0$ is an arbitrary constant. We also have (\ref{A1}) from (\ref{A2}%
).

Converse proposition is done easily.
\end{proof}

\begin{proposition}
Let $\gamma :I\subseteq 
%TCIMACRO{\U{211d} }%
%BeginExpansion
\mathbb{R}
%EndExpansion
\rightarrow \mathbb{E}^{n}$ be a unit speed Frenet curve of osculating order 
$d\leq 4$ with $\kappa _{1}=$constant. Then $\gamma $ is of $GAW(7)$-type if
and only if

i) it is a straight line; or

ii) it is a Frenet curve of osculating order $3$ satisying 
\begin{equation*}
\kappa _{2}\neq \text{constant,}
\end{equation*}%
\begin{equation*}
\kappa _{2}^{\prime \prime }\neq \frac{3\kappa _{2}\left( \kappa
_{2}^{\prime }\right) ^{2}}{\kappa _{1}^{2}+\kappa _{2}^{2}}+\kappa
_{2}(\kappa _{1}^{2}+\kappa _{2}^{2})\text{; or}
\end{equation*}

iv) it is a Frenet curve of osculating order $4$ satisfying%
\begin{equation*}
\kappa _{2}\neq \text{constant,}
\end{equation*}%
\begin{equation*}
\kappa _{2}\neq \frac{c}{\sqrt{\kappa _{3}}},
\end{equation*}%
\begin{equation*}
\frac{3\kappa _{2}\kappa _{2}^{\prime }}{\kappa _{1}^{2}+\kappa _{2}^{2}}=%
\frac{2\kappa _{2}^{\prime }}{\kappa _{2}}+\frac{\kappa _{3}^{\prime }}{%
\kappa _{3}},
\end{equation*}%
\begin{equation*}
\kappa _{2}^{\prime \prime }\neq \frac{3\kappa _{2}\left( \kappa
_{2}^{\prime }\right) ^{2}}{\kappa _{1}^{2}+\kappa _{2}^{2}}+\kappa
_{2}(\kappa _{1}^{2}+\kappa _{2}^{2}+\kappa _{3}^{2}),
\end{equation*}%
where $c>0$ is an arbitrary constant.
\end{proposition}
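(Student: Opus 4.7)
The plan is to specialize Theorem~\ref{maintheorem}(vii) to $\kappa_{1}=\text{constant}$. Substituting $\kappa_{1}'=\kappa_{1}''=\kappa_{1}'''=0$ into (\ref{lamdas}) and (\ref{nus}) yields $\lambda_{2}=-\kappa_{1}(\kappa_{1}^{2}+\kappa_{2}^{2})$, $\lambda_{3}=\kappa_{1}\kappa_{2}'$, $\lambda_{4}=\kappa_{1}\kappa_{2}\kappa_{3}$ and $\mu_{2}=-3\kappa_{1}\kappa_{2}\kappa_{2}'$, $\mu_{3}=\kappa_{1}[\kappa_{2}''-\kappa_{2}(\kappa_{1}^{2}+\kappa_{2}^{2}+\kappa_{3}^{2})]$, $\mu_{4}=\kappa_{1}(2\kappa_{2}'\kappa_{3}+\kappa_{2}\kappa_{3}')$. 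Feeding these into the three identities of Theorem~\ref{maintheorem}(vii), and using $\kappa_{1}'=0$ to drop the $a_{3}\kappa_{1}'$ term, collapses the $GAW(7)$ condition to
\begin{equation*}
\begin{array}{l}
3\kappa_{2}\kappa_{2}'=b_{3}(\kappa_{1}^{2}+\kappa_{2}^{2}), \\
\kappa_{2}''-\kappa_{2}(\kappa_{1}^{2}+\kappa_{2}^{2}+\kappa_{3}^{2})=a_{3}\kappa_{2}+b_{3}\kappa_{2}', \\
2\kappa_{2}'\kappa_{3}+\kappa_{2}\kappa_{3}'=b_{3}\kappa_{2}\kappa_{3}.
\end{array}
\end{equation*}
The proof reduces to deciding when this system admits everywhere-nonzero differentiable $a_{3},b_{3}$, which I handle by splitting on the osculating order $d\in\{1,2,3,4\}$.

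The cases $d=1$ and $d=2$ dispose themselves quickly. If $d=1$ then $\kappa_{1}=0$, so every side vanishes and any nonzero $a_{3},b_{3}$ work, giving the straight line. If $d=2$ (the circle: $\kappa_{1}>0$, $\kappa_{2}=0$) the first equation becomes $0=b_{3}\kappa_{1}^{2}$, forcing $b_{3}=0$ and contradicting the definition; this explains why the circle is absent from the statement.

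For $d=3$ one has $\kappa_{3}=0$, so the third equation is trivial. The first equation yields $b_{3}=3\kappa_{2}\kappa_{2}'/(\kappa_{1}^{2}+\kappa_{2}^{2})$, which is nonzero iff $\kappa_{2}\neq$ constant. Substituting into the second equation and solving for $a_{3}$ gives a rational expression in the curvatures, and $a_{3}\neq 0$ unpacks precisely to the inequality $\kappa_{2}''\neq 3\kappa_{2}(\kappa_{2}')^{2}/(\kappa_{1}^{2}+\kappa_{2}^{2})+\kappa_{2}(\kappa_{1}^{2}+\kappa_{2}^{2})$ in item (ii).

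The case $d=4$ is the main step and is where the condition $\kappa_{2}\neq c/\sqrt{\kappa_{3}}$ enters. Now both the first and the third equation produce expressions for $b_{3}$, and equating them yields the compatibility identity $3\kappa_{2}\kappa_{2}'/(\kappa_{1}^{2}+\kappa_{2}^{2})=2\kappa_{2}'/\kappa_{2}+\kappa_{3}'/\kappa_{3}$ of item (iv). The profile $\kappa_{2}=c/\sqrt{\kappa_{3}}$ is exactly the general solution of $2\kappa_{2}'\kappa_{3}+\kappa_{2}\kappa_{3}'=0$ (as used in Proposition~\ref{propGAW5}); on that locus the third equation forces $b_{3}\kappa_{2}\kappa_{3}=0$ and hence $b_{3}=0$, again contradicting the definition, so $\kappa_{2}\neq c/\sqrt{\kappa_{3}}$ is necessary. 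Solving the second equation for $a_{3}$ and imposing $a_{3}\neq 0$ produces the last inequality. The converse in every case is routine: given the curvature conditions, read off $a_{3},b_{3}$ from the same formulas and verify $N_{4}=a_{3}N_{2}+b_{3}N_{3}$ directly from (\ref{Normalparts}). The main obstacle is isolating the exclusion $\kappa_{2}\neq c/\sqrt{\kappa_{3}}$, since it is not visible in any single equation but emerges only from requiring $b_{3}$ to be simultaneously nonzero and compatible with the two different expressions coming from the first and third equations.
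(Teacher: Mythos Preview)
Your argument is correct and follows essentially the same route as the paper: specialize Theorem~\ref{maintheorem}(vii) using $\kappa_{1}'=0$, reduce to the three scalar equations, and split on $d$. The only cosmetic differences are that you divide out $\kappa_{1}$ before the case split (whereas the paper carries the $\kappa_{1}$ factors), and in the $d=2$ case you reach the contradiction via $b_{3}=0$ while the paper phrases it as $\kappa_{1}=0$; also, in the $d=4$ case you should state explicitly that the first expression $b_{3}=3\kappa_{2}\kappa_{2}'/(\kappa_{1}^{2}+\kappa_{2}^{2})$ being nonzero forces $\kappa_{2}\neq\text{constant}$, as the paper does.
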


\begin{proof}
Let $\gamma $ be of $GAW(7)$-type. If we use equations (\ref{lamdas}), (\ref%
{nus}) and Theorem \ref{maintheorem}, we obtain%
\begin{equation}
-3\kappa _{1}\kappa _{2}\kappa _{2}^{\prime }=b_{3}(-\kappa _{1}^{3}-\kappa
_{1}\kappa _{2}^{2}),  \label{b1}
\end{equation}%
\begin{equation}
-\kappa _{1}^{3}\kappa _{2}-\kappa _{1}\kappa _{2}^{3}+\kappa _{1}\kappa
_{2}^{\prime \prime }-\kappa _{1}\kappa _{2}\kappa _{3}^{2}=a_{3}\kappa
_{1}\kappa _{2}+b_{3}\kappa _{1}\kappa _{2}^{\prime },  \label{b2}
\end{equation}%
\begin{equation}
2\kappa _{1}\kappa _{2}^{\prime }\kappa _{3}+\kappa _{1}\kappa _{2}\kappa
_{3}^{\prime }=b_{3}\kappa _{1}\kappa _{2}\kappa _{3}.  \label{b3}
\end{equation}%
If $d=1$, $\gamma $ is a straight line. Let $d=2$. Then, from (\ref{b1}), we
find $\kappa _{1}=0$. This is a contradiction. Let $d=3$. Then, using (\ref%
{b1}), $\kappa _{2}$ can not be contant. By the use of (\ref{b1}) and (\ref%
{b2}), we get%
\begin{equation}
b_{3}=\frac{3\kappa _{2}\kappa _{2}^{\prime }}{\kappa _{1}^{2}+\kappa
_{2}^{2}},  \label{b4}
\end{equation}%
\begin{equation*}
a_{3}=\frac{\kappa _{2}^{\prime \prime }}{\kappa _{2}}-\frac{3\kappa
_{2}\left( \kappa _{2}^{\prime }\right) ^{2}}{\kappa _{2}\left( \kappa
_{1}^{2}+\kappa _{2}^{2}\right) }-(\kappa _{1}^{2}+\kappa _{2}^{2}),
\end{equation*}%
both of which are non-zero differentiable functions. Again, equation (\ref%
{b4}) requires $\kappa _{2}$ is not a contant. We also have 
\begin{equation*}
\kappa _{2}^{\prime \prime }\neq \frac{3\kappa _{2}\left( \kappa
_{2}^{\prime }\right) ^{2}}{\kappa _{1}^{2}+\kappa _{2}^{2}}+\kappa
_{2}(\kappa _{1}^{2}+\kappa _{2}^{2}).
\end{equation*}%
Now, let $d=4$. Then, using equations (\ref{b1}), (\ref{b2}) and (\ref{b3}),
we obtain%
\begin{equation}
b_{3}=\frac{3\kappa _{2}\kappa _{2}^{\prime }}{\kappa _{1}^{2}+\kappa
_{2}^{2}}=\frac{2\kappa _{2}^{\prime }}{\kappa _{2}}+\frac{\kappa
_{3}^{\prime }}{\kappa _{3}},  \label{b5}
\end{equation}%
\begin{equation*}
a_{3}=\frac{\kappa _{2}^{\prime \prime }}{\kappa _{2}}-\frac{3\kappa
_{2}\left( \kappa _{2}^{\prime }\right) ^{2}}{\kappa _{2}\left( \kappa
_{1}^{2}+\kappa _{2}^{2}\right) }-(\kappa _{1}^{2}+\kappa _{2}^{2}+\kappa
_{3}^{2}),
\end{equation*}%
which give us%
\begin{equation*}
\kappa _{2}\neq \text{constant,}
\end{equation*}%
\begin{equation*}
\kappa _{2}\neq \frac{c}{\sqrt{\kappa _{3}}},
\end{equation*}%
\begin{equation*}
\kappa _{2}^{\prime \prime }\neq \frac{3\kappa _{2}\left( \kappa
_{2}^{\prime }\right) ^{2}}{\kappa _{1}^{2}+\kappa _{2}^{2}}+\kappa
_{2}(\kappa _{1}^{2}+\kappa _{2}^{2}+\kappa _{3}^{2}).
\end{equation*}%
Here, $c>0$ is an arbitrary constant.

Converse proposition is trivial.
\end{proof}

\bigskip

\noindent Kadri ARSLAN

\noindent Department of Mathematics,

\noindent Uludag University,

\noindent G\"{o}r\"{u}kle Campus,16059 Bursa, TURKEY

\noindent Email: arslan@uludag.edu.tr

\  \  \  \  \  \  \  \  \bigskip

\noindent \c{S}aban G\"{U}VEN\c{C}

\noindent Department of Mathematics,

\noindent Balikesir University,

\noindent \c{C}a\u{g}\i \c{s}, 10145 Balikesir, TURKEY

\noindent Email: sguvenc@balikesir.edu.tr

\end{document}